\newlength{\cellsize} \setlength{\cellsize}{18\unitlength}
\newsavebox{\cell}
\sbox{\cell}{%
\begin{picture}(18,18)\linethickness{0.6pt} %
  \put(0,0){\line(1,0){18}} \put(0,0){\line(0,1){18}}
  \put(18,0){\line(0,1){18}} \put(0,18){\line(1,0){18}}
\end{picture}}
\newcommand\cellify[1]{%
  \def\thearg{#1}\def\nothing{}%
  \ifx\thearg\nothing \vrule width0pt height\cellsize depth0pt\else
  \hbox to 0pt{\usebox{\cell} \hss}\fi%
  \vbox to \cellsize{ \vss \hbox to
  \cellsize{\hss$#1$\hss} \vss}
}
\newcommand\tableau[1]{\vtop{\let\\\cr
\baselineskip -16000pt \lineskiplimit 16000pt \lineskip 0pt
\ialign{&\cellify{##}\cr#1\crcr}}}
\newcommand\bas[1]{\omit \vbox to \cellsize{ \vss \hbox to \cellsize{\hss$#1$\hss} \vss}}
\newfixedcaption{\outfigcaption}{figure}
\theoremstyle{plain}
\newtheorem{thm}{Theorem}
\newtheorem{lem}[thm]{Lemma}
\theoremstyle{definition}
\newtheorem{defi}[thm]{Definition}
\newtheorem{exa}[thm]{Example}
\newtheorem{rem}[thm]{Remark}
\newcommand{\la}{\lambda}
\newcommand{\spam}{\mathrm{span}}
\newcommand{\new}[1]{#1}
\title[Chromatic symmetric function bases]{Chromatic bases for symmetric functions}
\thanks{The first author was supported by Basic Science Research Program through the National Research Foundation of Korea (NRF) funded by the Ministry of Education (NRF-2015R1D1A1A01057476).    The second author was supported in part by the National Sciences and Engineering Research Council of Canada.}
\author{Soojin Cho}
\address{
 Department of Mathematics\\
 Ajou University\\
 Suwon  443-749, Korea}
\email{chosj@ajou.ac.kr}
\author{Stephanie van Willigenburg}
\address{
 Department of Mathematics \\
 University of British Columbia\\
 Vancouver BC V6T 1Z2, Canada}
\email{steph@math.ubc.ca}
\keywords{chromatic symmetric function, complete graph, star graph, path, cycle}
\subjclass[2010]{Primary 05E05; Secondary 05C15, 05C25}
\begin{document}

\begin{abstract}
In this note we obtain numerous new bases for the algebra of symmetric functions whose generators are chromatic symmetric functions. More precisely, if $\{ G_ k \} _{k\geq 1}$ is a set of connected graphs such that $G_k$ has $k$ vertices for each $k$, then the set of all chromatic symmetric functions $\{ X_{G_ k} \} _{k\geq 1}$ generates the algebra of symmetric functions.
 We also obtain explicit expressions for the generators arising from complete graphs, star graphs, path graphs and cycle graphs.
\end{abstract}
\maketitle
\section{Introduction} \label{sec:intro}
In \cite{Stan95} Stanley defined a symmetric function $X_G$ that was reliant on a finite simple graph $G$, called the chromatic symmetric function of $G$. He proved that $X_G$ specializes to the chromatic polynomial of $G$ and \new{generalizes} other chromatic polynomial properties, although intriguingly not the deletion-contraction property. Since then the chromatic symmetric function $X_G$ has been the genesis of two long-standing conjectures in algebraic combinatorics. The first of these conjectures that the chromatic symmetric functions of all $(3+1)$-free posets are a positive linear combination of elementary symmetric functions, for which a variety of evidence exists \cite{Gasharov, GPaquet, StanStem}. The second of these conjectures that the chromatic symmetric function distinguishes non-isomorphic trees. This conjecture has been confirmed for spiders \cite{MMW} plus a variety of caterpillars \cite{Jose2, MMW}, and towards a different approach a sufficient condition for graphs to have equal chromatic symmetric functions has also been discovered \cite{Orellana}.

In this vein of a different perspective on the chromatic symmetric function  we provide a potential new tool, namely we determine a myriad of new bases for the algebra of symmetric functions, whose generators are chromatic symmetric functions (Theorem~\ref{thm:bases}) and give explicit expansions for the generators arising from well-known graphs (Theorem~\ref{thm:expansions}). 
\section{Chromatic symmetric function bases}\label{sec:bases}
We begin by recalling concepts that will be useful later, and by defining the algebra of symmetric functions. A \emph{partition} $\lambda = (\lambda _1, \ldots , \lambda _\ell)$ of $n$, denoted by $\lambda \vdash n$, is a list of positive integers whose parts $\lambda _i$ satisfy $\lambda _1 \geq \cdots \geq \lambda _\ell$ and $\sum _{i=1}^\ell \lambda _i = n$. If $\lambda$ has exactly $m_i$ parts equal to $i$ for $1\leq i \leq n$ we will sometimes denote $\lambda$ by $\lambda = (1^{m_1},  \ldots , n^{m_n})$. Also, given  partitions of $n$, $\lambda = (\lambda _1, \ldots , \lambda _\ell)$ and $\mu = (\mu _1, \ldots , \mu _m)$ we say $\mu  \leq \lambda$ \emph{in lexicographic order} if $\mu = \lambda$ or $\mu _i = \lambda _i$ for $i<j$ and $\mu_j< \lambda _j$ for some $1\leq j\leq m$.

The algebra of symmetric functions is a subalgebra of $\mathbb{Q} [[x_1, x_2, \ldots ]]$ and can be defined as follows. We define the \emph{$i$-th power sum symmetric function} $p_i, i\geq 1$ to be
$$p_i = x^i_1 + x^i_2 + x^i_3 + \cdots$$and given a partition $\lambda = (\lambda _1, \ldots , \lambda _\ell)$, we define the \emph{power sum symmetric function} $p_\lambda$ to be
$$p_\lambda = p_{\lambda_1}\cdots p_{\lambda _\ell}\,.$$Then the \emph{algebra of symmetric functions}, $\Lambda$, is the graded algebra
$$\Lambda = \Lambda ^0 \oplus \Lambda ^1 \oplus \cdots$$where $\Lambda ^0 = \spam \{ 1 \} = \mathbb{Q}$ and for $n\geq 1$
$$\Lambda ^n = \spam \{ p_\lambda \,|\, \lambda \vdash n\}\,.$$The power sum symmetric functions in fact form a basis for $\Lambda$. Other well-known bases include the basis of Schur functions, the basis of complete homogeneous symmetric functions and  the basis of elementary symmetric functions, whose \emph{$i$-th elementary symmetric function} $e_i, i\geq 1$ is defined to be
$$e_i = \sum _{j_1< \cdots < j _i} x_{j_1} \cdots x_{j_i}$$leading to the celebrated fundamental theorem of symmetric functions, which states that
$$\Lambda = \mathbb{Q} [e_1, e_2, \ldots ]\,.$$

Our object of study is a further symmetric function, known as the chromatic symmetric function, which is reliant on a graph that is \emph{finite} and  \emph{simple}, and from here onwards we will assume that all our graphs satisfy these properties. We are now almost ready to define the chromatic symmetric function of  a graph, but before we do we recall the notion of a proper coloring. Given a graph $G$ with vertex set $V$  a \emph{proper coloring} $\kappa$ of $G$ is a function $$\kappa : V \rightarrow \{ 1, 2, \ldots \}$$such that if $v_1, v_2 \in V$ are adjacent, then $\kappa (v_1) \neq \kappa (v_2)$.

\begin{defi}\label{defi:XG} For a graph $G$ with vertex set $V=\{v_1, \dots, v_n\}$ and edge set $E$
 the \emph{chromatic symmetric function of $G$} is defined to be
$$ X_G=\sum_{\kappa} x_{\kappa(v_1)}   \cdots x_{\kappa(v_n)}$$where the sum is over all proper colorings $\kappa$ of $G$.\end{defi}

Given a graph $G$  with vertex set $V=\{v_1, \dots, v_n\}$ and edge set $E$, and a subset $S\subseteq E$, let $\la(S)$ be the partition of $n$ whose parts are equal to the number of vertices in the connected components of the spanning subgraph of $G$ with vertex set $V$ and edge set $S$.
We say a set partition $\pi=\{B_1,  \ldots, B_k\}$ of $V$ is \emph{connected} if the subgraph of $G$ determined by $B_i$ is connected for each $i$, and
 the \emph{lattice of contractions $L_G$  of $G$} is the set of all connected partitions of $V$ partially ordered by refinement \new{so that the unique minimal element $\hat{0}$ of $L_G$ is the partition into $n$ one element blocks}. Lastly, given  $\pi=\{B_1,  \ldots, B_k\}\in L_G$, the \emph{type}  of $\pi$, denoted by $\rm{type}(\pi)$, is the partition obtained by rearranging  $|B_1|,  \ldots, |B_k|$  in weakly decreasing order.  \new{With} this in mind we have the following.

\begin{lem}\label{lem:pbasis}\cite[Theorems 2.5 and 2.6]{Stan95} For a graph $G$ with vertex set $V$ and  edge set $E$ we have that
\begin{enumerate}[1.]
\item $X_G=\sum_{S\subseteq E} (-1)^{|S|} p_{\la(S)}$,
\item $X_G=\sum_{\pi\in L_G} \mu(\hat{0}, \pi) p_{\rm{type}(\pi)}\,,$ \new{where $\mu$ is the M\"{o}bius function of $L_G$}, and $\mu(\hat{0}, \pi)$ is non-zero for all $\pi\in L_G$.
\end{enumerate}
\end{lem}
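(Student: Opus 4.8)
The plan is to prove part~1 by a sieve over the monochromatic edges, and then to deduce part~2 --- including the non-vanishing of $\mu(\hat{0},\pi)$ --- by M\"obius inversion on $L_G$ together with the fact that $L_G$ is a geometric lattice. For part~1, I would start from Definition~\ref{defi:XG} and remove the properness constraint at the cost of an inclusion--exclusion factor: for an arbitrary coloring $\kappa\colon V\to\{1,2,\dots\}$ and an edge $e\in E$ set $\chi_e(\kappa)=1$ if $\kappa$ is constant on the two endpoints of $e$ and $\chi_e(\kappa)=0$ otherwise, so that $\prod_{e\in E}\bigl(1-\chi_e(\kappa)\bigr)$ equals $1$ exactly when $\kappa$ is proper and $0$ otherwise. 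Expanding this product and interchanging the order of summation yields
$$X_G=\sum_{\kappa}x_{\kappa(v_1)}\cdots x_{\kappa(v_n)}\sum_{S\subseteq E}(-1)^{|S|}\prod_{e\in S}\chi_e(\kappa)=\sum_{S\subseteq E}(-1)^{|S|}\sum_{\kappa}x_{\kappa(v_1)}\cdots x_{\kappa(v_n)},$$
where the last inner sum runs over colorings $\kappa$ that are constant on every edge of $S$, equivalently constant on each connected component of the spanning subgraph $(V,S)$. Choosing the color of each component independently, that sum factors as $\prod_{C}\bigl(\sum_{j\geq1}x_j^{|C|}\bigr)=\prod_{C}p_{|C|}=p_{\la(S)}$, the product running over the components $C$ of $(V,S)$; this is part~1.

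For part~2, observe that each $S\subseteq E$ determines the connected set partition $\pi(S)\in L_G$ whose blocks are the vertex sets of the components of $(V,S)$, and that $\mathrm{type}(\pi(S))=\la(S)$. Grouping the sum in part~1 according to the value of $\pi(S)$ rewrites it as $X_G=\sum_{\pi\in L_G}g(\pi)\,p_{\mathrm{type}(\pi)}$, where $g(\pi):=\sum_{S:\,\pi(S)=\pi}(-1)^{|S|}$, so part~2 amounts to showing $g(\pi)=\mu(\hat{0},\pi)$. The key point is that $\pi(S)\leq\pi$ holds precisely when every edge of $S$ has both endpoints inside a single block of $\pi$; writing $E_\pi$ for the set of such edges, this gives
$$\sum_{\sigma\in L_G,\ \sigma\leq\pi}g(\sigma)=\sum_{S\subseteq E_\pi}(-1)^{|S|},$$
which vanishes unless $E_\pi=\emptyset$, and $E_\pi=\emptyset$ forces every block of $\pi$ to be a singleton (a connected block with at least two vertices contains an edge), i.e.\ $\pi=\hat{0}$. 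Hence $\sum_{\sigma\leq\pi}g(\sigma)=\delta_{\pi,\hat{0}}$, and M\"obius inversion in $L_G$ yields $g(\pi)=\mu(\hat{0},\pi)$, as desired.

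It remains to see that $\mu(\hat{0},\pi)\neq0$, and here I would identify $L_G$ with the lattice of flats of the graphic matroid of $G$: a connected partition $\pi$ corresponds to the flat $F_\pi\subseteq E$ consisting of all edges internal to the blocks of $\pi$, the inverse map sends a flat $F$ to the partition of $V$ into the components of $(V,F)$, and this order isomorphism carries refinement to inclusion. Thus $L_G$ is a geometric lattice, and Rota's sign theorem --- $(-1)^{\mathrm{rank}(\pi)}\mu(\hat{0},\pi)>0$ for every element $\pi$ of a geometric lattice --- gives the non-vanishing; for disconnected $G$ one first decomposes $L_G$ as a direct product over the components of $G$ and uses that $\mu$ is multiplicative over direct products. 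I expect part~1 to be entirely routine; the step deserving the most care is the combinatorial bookkeeping in part~2 --- verifying the identification $\{S:\pi(S)\leq\pi\}=2^{E_\pi}$ and that $\hat{0}$ is the only partition with $E_\pi=\emptyset$ --- together with correctly invoking (or, for a self-contained treatment, reproving via the crosscut theorem) the geometric-lattice fact behind the non-vanishing.
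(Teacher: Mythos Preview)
The paper does not supply its own proof of this lemma: it is quoted verbatim as \cite[Theorems~2.5 and~2.6]{Stan95} and used as a black box, so there is nothing in the present paper to compare against. Your argument is correct and is essentially the standard one (and in particular matches Stanley's): the inclusion--exclusion over monochromatic edges gives part~1, grouping by $\pi(S)$ and M\"obius inversion gives part~2, and identifying $L_G$ with the lattice of flats of the cycle matroid and invoking Rota's sign theorem handles the non-vanishing of $\mu(\hat{0},\pi)$.
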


The chromatic symmetric function also satisfies the  following useful property.

\begin{lem}\cite[Proposition 2.3]{Stan95}\label{lem:disjoint} If a graph $G$ is a disjoint union of \new{subgraphs} $G_1, \dots, G_\ell$, then $X_G=\prod_{i=1}^\ell X_{G_i}$.
\end{lem}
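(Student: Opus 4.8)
The plan is to argue directly from Definition~\ref{defi:XG}, exploiting the fact that a disjoint union has no edges joining distinct pieces. Write $V_i$ and $E_i$ for the vertex and edge sets of $G_i$, so that $V = V_1 \sqcup \cdots \sqcup V_\ell$ and $E = E_1 \sqcup \cdots \sqcup E_\ell$. The crux is the observation that a function $\kappa : V \to \{1, 2, \ldots\}$ is a proper coloring of $G$ if and only if each restriction $\kappa|_{V_i}$ is a proper coloring of $G_i$: every edge of $G$ lies entirely within some $G_i$, so the proper coloring condition for $G$ is precisely the conjunction of the proper coloring conditions for the $G_i$. Hence proper colorings $\kappa$ of $G$ correspond bijectively to tuples $(\kappa_1, \ldots, \kappa_\ell)$ in which $\kappa_i$ is a proper coloring of $G_i$, the choices being entirely independent.

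Under this correspondence the monomial attached to $\kappa$ factors as $\prod_{v \in V} x_{\kappa(v)} = \prod_{i=1}^\ell \prod_{v \in V_i} x_{\kappa_i(v)}$, and I would then sum over all proper colorings of $G$ and use this independence to split the sum into a product:
$$X_G \;=\; \sum_{\kappa} \prod_{v \in V} x_{\kappa(v)} \;=\; \sum_{\kappa_1} \cdots \sum_{\kappa_\ell} \prod_{i=1}^\ell \prod_{v \in V_i} x_{\kappa_i(v)} \;=\; \prod_{i=1}^\ell \Biggl( \sum_{\kappa_i} \prod_{v \in V_i} x_{\kappa_i(v)} \Biggr) \;=\; \prod_{i=1}^\ell X_{G_i}.$$
Formally it suffices to treat $\ell = 2$ and then induct on $\ell$, which disposes of the general statement at once.

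I do not anticipate a genuine obstacle. One alternative, perhaps even shorter, is to invoke Lemma~\ref{lem:pbasis}(1): a subset $S \subseteq E$ is the same as a tuple $(S_1, \ldots, S_\ell)$ with $S_i \subseteq E_i$, the connected components of the spanning subgraph $(V, S)$ are the union over $i$ of those of $(V_i, S_i)$, so $(-1)^{|S|} p_{\la(S)} = \prod_{i=1}^\ell (-1)^{|S_i|} p_{\la(S_i)}$ by multiplicativity of $p$, and distributing the resulting product of sums gives the claim. The only point worth a remark in either approach is that one is rearranging an infinite sum of monomials into a finite product of infinite sums; this is legitimate because these are manipulations of formal power series in $\mathbb{Q}[[x_1, x_2, \ldots]]$ and the family in question is summable in the usual formal topology. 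Everything else is bookkeeping.
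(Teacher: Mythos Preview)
Your argument is correct and is essentially the standard proof. Note, however, that the paper does not supply its own proof of this lemma: it is stated as a citation of \cite[Proposition~2.3]{Stan95}, so there is no in-paper proof to compare against.
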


We now need one last definition before we can state our theorem.

\begin{defi}\label{defi:G}
Let $\{ G_ k \} _{k\geq 1}$ be a set of connected graphs such that $G_k$ has $k$ vertices for each $k$, and let $\lambda = (\lambda _1 , \ldots , \lambda _\ell)$ be a partition. Then
$$G_\lambda = G _{\lambda _1}\cup \cdots \cup G _{\lambda _\ell}\,,$$that is, $G_\lambda$ is the graph whose connected components are $G_{\lambda _1}, \ldots , G_{\lambda_\ell}$.
\end{defi}

We can now determine a plethora of new bases for $\Lambda$.

\begin{thm}\label{thm:bases}
Let $\{ G_ k \} _{k\geq 1}$ be a set of connected graphs such that $G_k$ has $k$ vertices for each $k$. Then
$$\{ X_{G_\lambda} \,|\, \lambda \vdash n\}$$is a $\mathbb{Q}$-basis of $\Lambda ^n$. Plus we have  that
$$\Lambda = \mathbb{Q} [X_{G_1}, X_{G_2}, \ldots ]$$and the $X_{G_k}$ are algebraically independent over $\mathbb{Q}$.
\end{thm}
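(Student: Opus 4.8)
The plan is to prove the first assertion, that $\{X_{G_\lambda}\mid \lambda\vdash n\}$ is a $\mathbb{Q}$-basis of $\Lambda^n$, by showing that the matrix expressing these functions in the power sum basis $\{p_\mu\mid\mu\vdash n\}$ is triangular with nonzero diagonal with respect to lexicographic order; the other two assertions then follow formally. First I would record the effect of connectedness in Lemma~\ref{lem:pbasis}(2): since $G_k$ is connected, the unique maximal element $\hat{1}=\{V\}$ of $L_{G_k}$ has $\mathrm{type}(\hat{1})=(k)$, while every other $\pi\in L_{G_k}$ has at least two blocks, so $\mathrm{type}(\pi)$ is a partition of $k$ with at least two parts, and hence $\mathrm{type}(\pi)<(k)$ in lexicographic order. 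Consequently
$$X_{G_k}=c_k\,p_k+\sum_{\mu\vdash k,\ \mu<(k)}a_{k,\mu}\,p_\mu,\qquad c_k=\mu(\hat{0},\hat{1})\neq 0,$$
where the nonvanishing of $c_k$ is precisely the last clause of Lemma~\ref{lem:pbasis}(2).

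Next, for $\lambda=(\lambda_1,\ldots,\lambda_\ell)\vdash n$, Lemma~\ref{lem:disjoint} gives $X_{G_\lambda}=\prod_{i=1}^\ell X_{G_{\lambda_i}}$, and expanding each factor by the displayed formula writes $X_{G_\lambda}$ as a $\mathbb{Q}$-linear combination of products $p_{\nu^{(1)}}\cdots p_{\nu^{(\ell)}}=p_\mu$, where $\nu^{(i)}\vdash\lambda_i$ and $\mu\vdash n$ is the concatenation of the $\nu^{(i)}$ sorted into weakly decreasing order. The combinatorial heart of the argument is that $\mu\leq\lambda$ in lexicographic order, with equality if and only if $\nu^{(i)}=(\lambda_i)$ for every $i$: the equality case is forced because the number of parts of $\mu$ is the sum of the numbers of parts of the $\nu^{(i)}$, which equals $\ell$ only when each $\nu^{(i)}$ is a single part; and if some $\nu^{(i)}$ has two or more parts then $\mu$ is obtained from $\lambda$ by a nonempty sequence of moves each splitting a part $c$ into parts $a,b\geq 1$ with $a+b=c$, and a single such move strictly decreases a partition in lexicographic order (the first change occurs at the position of the last part equal to $c$). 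Hence the only term producing $p_\lambda$ is the one with $\nu^{(i)}=(\lambda_i)$ throughout, so
$$X_{G_\lambda}=\Bigl(\prod_{i=1}^\ell c_{\lambda_i}\Bigr)p_\lambda+\sum_{\mu\vdash n,\ \mu<\lambda}b_{\lambda,\mu}\,p_\mu,$$
with leading coefficient nonzero. Listing the partitions of $n$ in decreasing lexicographic order thus exhibits the change of basis between $\{X_{G_\lambda}\}$ and $\{p_\mu\}$ as a triangular matrix with nonzero diagonal, hence invertible over $\mathbb{Q}$; since $\{p_\mu\mid\mu\vdash n\}$ is a basis of $\Lambda^n$, so is $\{X_{G_\lambda}\mid\lambda\vdash n\}$.

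The remaining two statements are then immediate. Each $X_{G_\lambda}$ is the monomial $\prod_i X_{G_{\lambda_i}}$ in $X_{G_1},X_{G_2},\ldots$, and since these monomials span $\Lambda^n$ for every $n$ and $\Lambda=\bigoplus_{n\geq 0}\Lambda^n$, the subalgebra $\mathbb{Q}[X_{G_1},X_{G_2},\ldots]$ equals $\Lambda$. Moreover the monomials in $X_{G_1},X_{G_2},\ldots$ are exactly the functions $X_{G_\lambda}$, which we have just shown to be linearly independent over $\mathbb{Q}$, so no nontrivial polynomial relation among the $X_{G_k}$ can hold, i.e.\ they are algebraically independent. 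I expect the only genuine obstacle to be the lexicographic estimate in the second paragraph --- verifying that refining a part strictly decreases a partition in lexicographic order and pinning down the equality case --- with everything else routine once triangularity is in hand. (One could equally run the argument using dominance order, which is also strictly decreased by splitting a part, since triangularity with respect to any partial order that extends to a linear order still forces invertibility.)
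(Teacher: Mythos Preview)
Your proof is correct and follows the same overall strategy as the paper's: establish that the transition from $\{X_{G_\lambda}\}$ to $\{p_\mu\}$ is triangular with nonzero diagonal in lexicographic order, then deduce generation and algebraic independence from multiplicativity. The only difference is in execution. The paper applies Lemma~\ref{lem:pbasis}(2) directly to the disjoint union $G_\lambda$, observing that every connected set partition $\pi\in L_{G_\lambda}$ has each block contained in some $V_i$, so $\mathrm{type}(\pi)\le\lambda$, with equality only for $\pi_\lambda=\{V_1,\ldots,V_\ell\}$; this bypasses your splitting-parts argument entirely and identifies the leading coefficient as the single M\"obius value $\mu(\hat 0,\pi_\lambda)$ rather than the product $\prod_i c_{\lambda_i}$ (the two agree because $L_{G_\lambda}\cong\prod_i L_{G_{\lambda_i}}$). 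Your route, factoring first via Lemma~\ref{lem:disjoint} and then controlling the lexicographic order of concatenated partitions, is equally valid and has the mild advantage of making the argument self-contained at the level of partitions rather than appealing to the structure of $L_{G_\lambda}$.
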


\begin{proof} Let $\la=(\la_1, \dots, \la_\ell)\vdash n$ and $V_i$ be the sets of vertices in $G_{\la_i}$ for $1\leq i\leq \ell$. Then $$V=\biguplus_{i=1}^\ell V_i$$is the set of vertices in $G _\la$. By the definition of $G_\la$, we know that if $\pi \in L_{G_\la}$, then $\rm{type}(\pi)\leq \lambda$ in lexicographic order.
Thus by Lemma~\ref{lem:pbasis} it follows that $$X_{G_\la}=\sum_{\mu\leq \la} c_{\la\mu} p_\mu$$and, moreover, that $c_{\la\la}=\mu(\hat{0}, \pi_\la)\neq 0$ where $\pi_\la=(V_1,  \ldots, V_\ell)$ is the unique connected partition of $V$ satisfying $\rm{type}(\pi_\la)=\la$. Hence, $\{ X_{G_\lambda} \,|\, \lambda \vdash n\}$ is a $\mathbb{Q}$-basis of $\Lambda ^n$.

Since for $\la=(\la_1, \dots, \la_\ell)$ we have
\begin{equation}\label{eq:Glambda}
X_{G_\la}=\prod_{i=1}^\ell X_{G_{\la _i}}
\end{equation}by Lemma~\ref{lem:disjoint} and $\{ X_{G_\lambda}\} _{\lambda \vdash n\geq 1}\cup \{1\}$ forms a $\mathbb{Q}$-basis for $\Lambda$, every element of $\Lambda$ is expressible uniquely as a polynomial in the $X_{G_k}$ and hence  $\Lambda = \mathbb{Q} [X_{G_1}, X_{G_2}, \ldots ]$ and the $X_{G_k}$ are algebraically independent over $\mathbb{Q}$.
\end{proof}
 
\pagebreak
\begin{exa}\label{exa:1}

If 
\begin{figure}[!ht]
  \begin{tikzpicture}
    \tikzstyle{Element} = [circle, draw, fill=black!100,
                        inner sep=0pt, minimum width=4pt]
    \node at (2.0,3.0) {$G_1=$};
    
        \node[Element] at (2.7,3.0) {} ;
             
        \node at  (4.5, 3.0) {$G_2=$};
        \node[Element] (a) at (5.2, 3.0) {};
        \node[Element] (b) at (6.2, 3.0) {};
        \draw (a) to (b);        
        
        \node at  (8.0, 3.0) {$G_3=$};
        \node[Element] (c) at (8.7, 2.5) {};
        \node[Element] (d) at (8.7, 3.5) {};
        \node[Element] (e) at (9.7, 3.5) {};
        \draw (c) to (d);
        \draw (d) to (e);
        \draw (e) to (c);

        \node at  (12.0, 3.0) {$G_4=$};
        \node[Element] (c) at (12.7, 2.5) {};
        \node[Element] (d) at (12.7, 3.5) {};
        \node[Element] (e) at (13.7, 3.5) {};
        \node[Element] (f) at (13.7, 2.5) {};
        \draw (c) to (d);
        \draw (d) to (e);
        \draw (e) to (c);
        \draw (e) to (f);
\end{tikzpicture}
\end{figure}

\noindent then $\{X_{G_1}, X_{G_2}, X_{G_3}, X_{G_4}\}$ is a set of generators for $\Lambda^4$ and

$X_{G_{(4)}}=X_{G_4}=-2p_{(4)}+4p_{(3,1)}+p_{(2,2)}-4p_{(2, 1, 1)}+p_{(1, 1, 1, 1)}$\,,

$X_{G_{(3,1)}}=X_{G_3}X_{G_1}=(2p_{(3)}-3p_{(2,1)}+p_{(1, 1, 1)})p_{(1)}=2p_{(3,1)}-3p_{(2,1,1)}+p_{(1, 1, 1, 1)}$\,,

$X_{G_{(2,2)}}=X_{G_2}X_{G_2}=(-p_{(2)}+p_{(1, 1)})^2=p_{(2, 2)}-2p_{(2, 1, 1)}+p_{(1, 1, 1, 1)}$\,,

$X_{G_{(2,1, 1)}}=X_{G_2}X_{G_1}X_{G_1}=(-p_{(2)}+p_{(1, 1)})p_{(1)}p_{(1)}=-p_{(2, 1, 1)}+p_{(1, 1, 1, 1)}$\,, and

$X_{G_{(1, 1,1, 1)}}=X_{G_1}X_{G_1}X_{G_1}X_{G_1}=p_{(1)}^4=p_{(1, 1, 1, 1)}$

\noindent is a $\mathbb{Q}$-basis of $\Lambda^4$. 

 Alternatively, if $A, B, C, D$ are as below, then $\{ X_A, X_B, X_C, X_D\}$ is a set of generators for $\Lambda^4$ and $$\{X_D, X_CX_A, X_BX_B, X_BX_AX_A, X_AX_AX_AX_A\}$$is a $\mathbb{Q}$-basis of $\Lambda^4$.
 
\begin{figure}[!ht]
  \begin{tikzpicture}
    \tikzstyle{Element} = [circle, draw, fill=black!100,
                        inner sep=0pt, minimum width=4pt]
    \node at (2.0,3.0) {$A=$};
    
        \node[Element] at (2.7,3.0) {} ;
             
        \node at  (4.5, 3.0) {$B=$};
        \node[Element] (a) at (5.2, 3.0) {};
        \node[Element] (b) at (6.2, 3.0) {};
        \draw (a) to (b);        
        
        \node at  (8.0, 3.0) {$C=$};
        \node[Element] (c) at (8.7, 3) {};
        \node[Element] (d) at (9.7, 3) {};
        \node[Element] (e) at (10.7, 3) {};
        \draw (c) to (d);
        \draw (d) to (e);
        \draw (e) to (c);

        \node at  (12.0, 3.0) {$D=$};
        \node[Element] (c) at (12.7, 2.5) {};
        \node[Element] (d) at (12.7, 3.5) {};
        \node[Element] (e) at (13.7, 3.5) {};
        \node[Element] (f) at (13.7, 2.5) {};
        \draw (c) to (d);
        \draw (d) to (e);
        \draw (e) to (c);
        \draw (e) to (f);
        \draw (c) to (f);
\end{tikzpicture}
\end{figure}

\end{exa}

\begin{rem}\label{rem:Qbasis}
Observe that the only connected graph on two vertices is $G_2$ above, and
$$e_2= \sum _{j_1<j_2} x _{j_1}x_{j_2} = \frac{1}{2} X_{G_2}\,.$$Therefore, while every $\{ X_{G_\lambda}\} _{\lambda \vdash n\geq 1}\cup \{1\}$ is a $\mathbb{Q}$-basis of $\Lambda$ it is never a $\mathbb{Z}$-basis of $\Lambda$.
\end{rem}

\section{Chromatic symmetric functions for classes of graphs}\label{sec:special case}

In this section, we compute chromatic symmetric functions for some particular connected graphs, whose definitions we include for clarity. The \emph{complete graph} $K_n, n\geq 1$ has $n$ vertices each pair of which are adjacent. The \emph{star graph} $S_{n+1}, (n+1)\geq 1$ has $n+1$ vertices and is the tree with one vertex of degree $n$ and $n$ vertices of degree one. The \emph{path graph} $P_n, n\geq 1$ has $n$ vertices and is the tree with $2$ vertices of degree one and $n-2$ vertices of degree $2$ for $n\geq 2$ and $P_1=K_1$. Lastly, the \emph{cycle graph}  $C_n, n\geq 1$ is the connected graph with $n$ vertices of degree $2$ for $n\geq 3$, $C_2=K_2$, $C_1=K_1$. The chromatic symmetric functions of Ferrers graphs, naturally related to $\Lambda$ via Ferrers diagrams, were computed in \cite{EvW}. We note that the third formula appears in the second proof of \cite[Proposition 5.3]{Stan95} that gives the generating function for $X_{P_n}$, and the generating function for $X_{C_n}$ is given in  \cite[Proposition 5.4]{Stan95}.

\begin{thm}\label{thm:expansions}
\begin{enumerate}[1.]
\item If $K_n$ is the complete graph with $n\geq 1$ vertices, then $$X_{K_n}=n!e_n\,.$$

\item If $S_{n+1}$ is the star graph with $(n+1) \geq 1$ vertices, then 
$$X_{S_{n+1}}=\sum_{r=0}^n (-1)^{r} {n\choose r} p_{(r+1, 1^{n-r})}\,.$$  

\item If $P_n$ is the path graph with $n\geq 1$ vertices, then
$$X_{P_n}=\sum_{\la=(1^{m_1}, \ldots, n^{m_n})\vdash n}(-1)^{n-\sum_{i=1}^n m_i}\frac{(\sum_{i=1}^n m_i)\, !}{\prod_{i=1}^n (m_i)!}\,\, p_{\la}.$$

\item If $C_n$ is the cycle graph with $n\geq 1$ vertices, then
$$X_{C_n}=\sum_{\la=(1^{m_1},  \dots, n^{m_n})\vdash n}(-1)^{n-\sum_{i=1}^n m_i}\frac{(\sum_{i=1}^n m_i) \, !}{\prod_{i=1}^n (m_i)!}\left(1+\sum_{j=2}^n (j-1) \frac{m_j}{\sum_{i=1}^n m_i} \right) p_\la +(-1)^np_n\,.$$
\end{enumerate}
\end{thm}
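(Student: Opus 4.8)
I will prove the four formulas one by one, in each case using part (1) of Lemma~\ref{lem:pbasis}, namely $X_G=\sum_{S\subseteq E}(-1)^{|S|}p_{\lambda(S)}$, since this expansion is combinatorially the easiest to control for these highly structured graphs.

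For the complete graph $K_n$: every proper coloring of $K_n$ is an injection $V\to\{1,2,\ldots\}$, so $X_{K_n}=\sum_{j_1<\cdots<j_n}n!\,x_{j_1}\cdots x_{j_n}=n!\,e_n$, where the factor $n!$ accounts for the orderings of the $n$ distinct colors among the $n$ vertices. (Alternatively one can sum over $S\subseteq E(K_n)$, but the direct coloring argument is cleanest here.)

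For the star graph $S_{n+1}$: let the edge set be $E=\{f_1,\ldots,f_n\}$, all incident to the central vertex. A subset $S\subseteq E$ with $|S|=r$ gives a spanning subgraph whose components are: one star on the $r$ leaves in $S$ together with the center (that is, $r+1$ vertices), and $n-r$ isolated leaves; hence $\lambda(S)=(r+1,1^{n-r})$. There are $\binom{n}{r}$ such subsets, so part (1) of Lemma~\ref{lem:pbasis} immediately gives $X_{S_{n+1}}=\sum_{r=0}^n(-1)^r\binom{n}{r}p_{(r+1,1^{n-r})}$.

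For the path $P_n$ and the cycle $C_n$ I will again sum over edge subsets $S$, but now I must organize the count by the partition $\lambda(S)$. For $P_n$ with edges $e_1,\ldots,e_{n-1}$ in order, deleting the edges \emph{not} in $S$ breaks the path into a composition of $n$ into $\sum m_i$ parts whose multiset of part sizes is exactly $\lambda=(1^{m_1},\ldots,n^{m_n})$; the number of such compositions is the multinomial coefficient $\binom{\sum m_i}{m_1,\ldots,m_n}=\frac{(\sum m_i)!}{\prod(m_i)!}$, and each contributes sign $(-1)^{|S|}=(-1)^{n-\sum m_i}$ since $|S|=(n-1)-(\text{number of deleted edges})$ and the number of deleted edges is $(\sum m_i)-1$, so $|S|=n-\sum m_i$. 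This yields formula (3). For $C_n$, which has $n$ edges $e_1,\ldots,e_n$ arranged in a cycle, I split the sum over $S$ according to whether $S=E$ (the whole cycle, giving the isolated extra term $(-1)^n p_{(n)}$) or $S\neq E$: in the latter case deleting at least one edge turns the cycle into a path-like configuration, and the components are the arcs between consecutive deleted edges, so I am counting \emph{cyclic} compositions of $n$ with prescribed part-multiset $\lambda$. The number of ways to place the deleted edges on the cycle so that the resulting arcs have sizes forming $\lambda$ is $\frac{n}{\sum m_i}\cdot\frac{(\sum m_i)!}{\prod(m_i)!}$ by the standard cycle-lemma count; rewriting $\frac{n}{\sum m_i}=1+\sum_{j=2}^n(j-1)\frac{m_j}{\sum m_i}$ (using $n=\sum_i i\,m_i$) gives exactly the bracketed factor in formula (4), with sign again $(-1)^{n-\sum m_i}$.

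\textbf{Main obstacle.} The complete-graph and star-graph cases are essentially immediate. The real bookkeeping is in the cycle case: one must correctly count placements of deleted edges around the cycle producing a given part-multiset, being careful about the boundary case $S=E$ (no deleted edges) which must be peeled off separately and accounts for the lone $(-1)^n p_{(n)}$ term, and then verify the algebraic identity $\frac{n}{\sum m_i}=1+\sum_{j=2}^n(j-1)\frac{m_j}{\sum m_i}$, which follows by substituting $n=\sum_{i=1}^n i\,m_i=\sum_i m_i+\sum_{j\ge 2}(j-1)m_j$. I expect that verifying the cyclic count (that each arc-size multiset $\lambda$ arises from exactly $\frac{n}{\sum m_i}\binom{\sum m_i}{m_1,\ldots,m_n}$ edge subsets) is the step most prone to off-by-one or overcounting errors, and I would double-check it against small cases such as $C_3$ and $C_4$ before finalizing.
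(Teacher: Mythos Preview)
Your proofs of parts (1)--(3) are essentially identical to the paper's: the direct proper-coloring argument for $K_n$, and the edge-subset expansion from Lemma~\ref{lem:pbasis}(1) for $S_{n+1}$ and $P_n$, with the multinomial count of compositions in the path case.

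For part (4) your argument is correct but genuinely different from the paper's. You peel off $S=E$ and then, for $S\neq E$, count in one stroke the number of edge subsets of $C_n$ whose arc-length multiset equals $\lambda$ as $\dfrac{n}{\sum_i m_i}\dbinom{\sum_i m_i}{m_1,\ldots,m_n}$ via the cycle lemma, and finally rewrite $\dfrac{n}{\sum_i m_i}$ using $n=\sum_i i\,m_i$ to obtain the bracketed factor. The paper instead fixes a distinguished edge $\epsilon_n$ and splits according to whether $\epsilon_n\in S$: the case $\epsilon_n\notin S$ reduces immediately to $X_{P_n}$, and for $\epsilon_n\in S$ (with $S\neq E$) it further conditions on the size $j\ge 2$ of the component containing $\epsilon_n$, observes there are $j-1$ positions for that component, and applies the $P_{n-j}$ formula to the remainder; summing the two contributions produces the bracketed expression directly. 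Your route is shorter and more conceptual once the cyclic-composition count is granted, while the paper's route is more self-contained (it never invokes an external counting lemma) and makes transparent why the coefficient splits as ``path term plus correction''. Either way the anticipated obstacle you flagged --- verifying the cyclic count and isolating the $S=E$ term --- is exactly the crux, and your handling of it is sound.
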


\begin{proof}
For $K_n$, since every vertex must be colored a different color and this can be done in $n!$ ways, $X_{K_n} = n! \sum _{j_1 < \cdots < j_n} x_{j_1} \cdots x_{j_n} = n! e_n$.

We now use the first part of Lemma~\ref{lem:pbasis}, which states that
$X_G=\sum_{S\subseteq E} (-1)^{|S|} p_{\la(S)}$, for the remainder of the proof, where $E$ is the set of edges of our graph $G$.

For $S_{n+1}$, if we choose any $r$ edges, then $(r+1)$ vertices will make a connected component and the remaining $(n-r)$ vertices will be isolated. Hence the second part of the theorem follows.

For $P_n$, draw this graph on a horizontal axis. Now consider the spanning subgraph of $P_n$ with $n$ vertices and edge set $S\subseteq E$, $P^S _n$. Counting the number of vertices in each connected component of $P^S _n$ from left to right yields a list of positive integers when rearranged into weakly decreasing order yield a partition, say $\lambda = (1^{m_1},  \ldots , n^{m_n}) \vdash n$. Since the number of edge sets, $S$, that will yield $\lambda$ is $\frac{(\sum_{i=1}^n m_i)\, !}{\prod_{i=1}^n (m_i)!}$, \new{and $|S|=\sum_{i=1}^n (i-1)m_i = n-\sum_{i=1}^n m_i$ for such $S$}, the third part is now proved.

For $C_n$ and a partition $\la=(1^{m_1},  \dots, n^{m_n})\vdash n$, we look for all subsets $S$ of the edge set $E$ that contribute to $p_\la$ in the expansion of $X_{C_n}$; that is, subsets $S$ satisfying $\la(S)=\la$. To this end, label the vertices of $C_n$ with $v_1, \dots, v_n$ in a clockwise direction, choosing $v_1$ arbitrarily, and let $\epsilon_i$ be the edge connecting $v_i$ and $v_{i+1}$ for $i=1, \dots, n$, where $v_{n+1}=v_1$. We first consider the possible $S$ that do not contain $\epsilon_n$: Since $\epsilon_n\not \in S$, such $S$ can be understood as a subset of the vertex set of $P_n$, and the contribution of such $S$ to $p_\la$ in the expansion of $X_{C_n}$ is the same as the coefficient of $p_\la$ in the expansion of $X_{P_n}$, which is 
\begin{equation}\label{eq:1}
(-1)^{n-\sum_{i=1}^n m_i}\frac{(\sum_{i=1}^n m_i)\, !}{\prod_{i=1}^n (m_i)!}\,.
\end{equation}
\newline We now consider the possible $S\neq E$ that do contain $\epsilon_n$: There are cases to consider, depending on the number of vertices $j$, where $j\geq 2$, in the connected component of $S$ that contains $\epsilon_n$. For each $j$, there are $(j-1)$ possible connected components depending on the smallest labeled vertex of the component, which can be $v_{n-j+2}, \ldots , v_n$. After we identify the connected component containing $\epsilon_n$ in $C_n$, the remainder of the graph is the path graph with $(n-j)$ vertices to which we can apply the third part of this theorem. The overall contribution of such $S$ is thus
\begin{equation}\label{eq:2}
\sum_{j=2}^n (-1)^{j-1}(j-1)(-1)^{n-j-(\sum_{i=1}^{n} m_i)+1}\frac{((\sum_{i=1}^n m_i)-1)\, ! (m_j)}{\prod_{i=1}^n (m_i)!}.
\end{equation}
We now add (\ref{eq:1}) and (\ref{eq:2}) to obtain that the coefficient of $p_\lambda$ is $$(-1)^{n-\sum_{i=1}^n m_i}\frac{(\sum_{i=1}^n m_i) \, !}{\prod_{i=1}^n (m_i)!}\left(1+\sum_{j=2}^n (j-1) \frac{m_j}{\sum_{i=1}^n m_i} \right)\,.$$Finally, when $S=E$ we obtain the term $(-1)^np_n$.
\end{proof}

\begin{rem}\label{rem:schurpositive} A natural question to ask is whether any of these bases is Schur positive, that is, a positive linear combination of Schur functions. The answer is not at all obvious, since the basis whose generators stem from complete graphs is trivially Schur positive, whereas the basis whose generators stem from star graphs is not as $X_{S_4}$ is not.
\end{rem}


\section*{Acknowledgements}\label{sec:acknow} The authors would like to thank Samantha Dahlberg and Boram Park for helpful conversations, the referee for helpful comments, and Ajou University where the research took place.

\bibliographystyle{amsplain}

\end{document}